\documentclass{amsart}   	
\oddsidemargin 0.200 true in
\evensidemargin 0.200 true in
\marginparwidth 1 true in
\topmargin -0.5 true in
\textheight 9 true in
\textwidth 6.0 true in

	
\usepackage{amssymb,amsthm}
\usepackage{amsthm}
\usepackage{graphicx}
\usepackage{enumitem}
\usepackage{tikz}
\usepackage{tikz-cd}
\usetikzlibrary{calc}
\usetikzlibrary{decorations.pathmorphing}
\usepackage{hyperref}
\usepackage{adjustbox}
\usepackage{mathtools}
\usepackage[all]{xy}
\DeclarePairedDelimiter\ceil{\lceil}{\rceil}
\DeclarePairedDelimiter\floor{\lfloor}{\rfloor}
\pagestyle{plain}

\tikzset{curve/.style={settings={#1},to path={(\tikztostart)
    .. controls ($(\tikztostart)!\pv{pos}!(\tikztotarget)!\pv{height}!270:(\tikztotarget)$)
    and ($(\tikztostart)!1-\pv{pos}!(\tikztotarget)!\pv{height}!270:(\tikztotarget)$)
    .. (\tikztotarget)\tikztonodes}},
    settings/.code={\tikzset{quiver/.cd,#1}
        \def\pv##1{\pgfkeysvalueof{/tikz/quiver/##1}}},
    quiver/.cd,pos/.initial=0.35,height/.initial=0}

\tikzset{tail reversed/.code={\pgfsetarrowsstart{tikzcd to}}}
\tikzset{2tail/.code={\pgfsetarrowsstart{Implies[reversed]}}}
\tikzset{2tail reversed/.code={\pgfsetarrowsstart{Implies}}}
\tikzset{no body/.style={/tikz/dash pattern=on 0 off 1mm}}

\graphicspath{ {./images/} }


\newtheorem{theorem}{Theorem}[section]

\newtheorem{lemma}[theorem]{Lemma}
\newtheorem{corollary}[theorem]{Corollary}

\theoremstyle{definition}
\newtheorem{definition}[theorem]{Definition}

\newtheorem{question}[theorem]{Question}

\newcommand{\qedno}{\null\nobreak\hfill\ensuremath{\square}}


\usepackage[utf8]{inputenc}

\title{Structure of a Maximal Total Independent Set}
\author{Lewis Stanton}

\email{lrs1g18@soton.ac.uk}

\subjclass{05C69}
\keywords{graph, independence number, total independence number}

\begin{document}
\maketitle
\begin{abstract}
Let $G$ be a simple, connected and finite graph of order $n$. Denote the independence number, edge independence number and total independence number by $\alpha(G), \alpha'(G)$ and $\alpha''(G)$ respectively. This paper establishes a relation between $\alpha''(G)$ with $\alpha(G)$, $\alpha'(G)$ and $n$. It also describes the possible structures of a total independent set of a given size.
\end{abstract}
\section{Introduction}
\label{intro}

The independence number (denoted $\alpha(G)$) of a graph $G$ is an extensively studied invariant in graph theory. There are a variety of bounds and computer algorithms which can be used to find the independence number and to construct an independent set of vertices. The same is true for a variant of the independence number known as the edge independence number (denoted $\alpha'(G)$). This is a similar invariant to the independence number but is defined on the edge set of the graph as opposed to the vertex set of a graph. The edge independence number is also well understood and is completely determined for certain classes of graphs such as Hamiltonian graphs. 

The total independence number (denoted $\alpha''(G)$) is another variation of the definition of the independence number, being defined on the union of the vertex and edge sets of $G$. While its value can be calculated for all graphs in terms of the edge domination number of a graph, there is not much known in terms of its relationship with other invariants and the structure of a total independent set containing a given number of elements.

In this paper, we explore the total independence number and its relationship with the independence number and edge independence number. We show in Theorem \ref{mainbound} that for a simple, connected and finite graph of order $n$, $\alpha''(G) \leq n + \floor*{\frac{\alpha(G)-n + 2\alpha'(G)}{2}} - \alpha'(G)$. Corollary \ref{otherbound} gives another bound which is that $\alpha''(G) \leq \frac{n + \alpha(G)}{2}$. In Section \ref{structure}, we explore the possible structures a total independent set of a given size can have, namely the possible number of edges and vertices it can contain.

\section{Basic Definitions}
\label{basicdefns}

In this paper, we will use the definitions from Gross, Yellen and Zhang \cite{GYZ}. A graph $G$ is \textit{finite} if its vertex set $V(G)$ and edge set $E(G)$ are finite. The \textit{order} of a graph $G$ is the number of vertices of $G$. We denote $|V(G)|$ as order$(G)$. Similarly, the \textit{size} of a graph $G$ is the number of edges of $G$. We denote $|E(G)|$ as size$(G)$.  A graph $G$ is \textit{simple} if any edge has two distinct end vertices and there is at most one edge between any two distinct vertices. Finally, a graph $G$ is \textit{connected} if for any two distinct vertices $x,y \in V(G)$ there exists a walk in $G$ between $x$ and $y$. \[\begin{tikzpicture} 
	   \draw (-0.1,-0.1) -- (0.5,0.6) -- (0.5,-0.8) -- (-0.1,-0.1);
     \draw (0.5,0.6) -- (1.1,-0.1) -- (0.5,-0.8);
		 \draw [fill] (-0.1,-0.1) circle [radius=0.03];
		 \draw [fill] (0.5,0.6) circle [radius=0.03];
		 \draw [fill] (0.5,-0.8) circle [radius=0.03];
		 \draw [fill] (1.1,-0.1) circle [radius=0.03];
	   \node at (-0.35,-0.1) {1};
		 \node at (0.5,0.85) {2};
		 \node at (0.5,-1.05) {3};
		 \node at (1.35,-0.1) {4};
          \node at (0.5,-1.5) {A simple, connected graph of order $4$ and size $5$};
  \end{tikzpicture}\]

We will restrict our attention to simple, connected and finite graphs; however, many of the results leading up to Theorem \ref{mainbound} may hold in a broader context. Let $G$ be a simple, connected and finite graph of order $n$. We can write an edge $e \in E(G)$ in terms of the two vertices that it joins together. These are known as the \textit{end vertices} of the edge, and we denote the edge $e$ with end vertices $x$ and $x'$ as $x \cdot x'$, where $x,x' \in V(G)$. There are various notions of adjacency in a graph. Two vertices $v, v' \in V(G)$ are adjacent if there is an edge $e \in E(G)$ such that $e = v \cdot v'$. Two edges $e,e' \in E(G)$ are adjacent if they share an end vertex. Finally, a vertex $v \in V(G)$ and $e \in E(G)$ are adjacent if $v$ is an end vertex of $e$. 

With these definitions of adjacency, we aim to find subsets of the vertex set or edge set such that none of the elements of the subset are pairwise adjacent. We can do this as the basic structure of the definitions of adjacency are the same; the only difference is the elements that are being considered. 

\begin{definition}
A subset $A \subseteq V(G)$ is an \textit{independent set} if the vertices in $A$ are pairwise non-adjacent. The \textit{independence number} of $G$ is \[\alpha(G) = \max\{\left\lvert A \right\rvert \:|\: \text{ A is an independent set of } G\}.\]
\end{definition}

\begin{definition}
A subset $A' \subseteq E(G)$ is an \textit{edge independent set} if the edges in $A'$ are pairwise non-adjacent. The \textit{edge independence number} is \[\alpha'(G) = \max\{\left\lvert A' \right\rvert \:|\: A' \text{ is an edge independent set of } G\}.\]
\end{definition}

\begin{definition}
A subset $A'' \subseteq V(G) \cup E(G)$ is a \textit{total independent set} if the elements of $A''$ are pairwise non-adjacent. The \textit{total independence number} is \[\alpha''(G) = \max\{\left\lvert A'' \right\rvert \:|\: A'' \text{ is a total independent set of } G\}.\] 
\end{definition}

\begin{equation}\begin{tikzpicture}\label{independencenumberfigure} 
	   \draw (-0.1,-0.1) -- (0.5,0.6) -- (0.5,-0.8);
     \draw (0.5,0.6) -- (1.1,-0.1);
		 \draw [fill] (-0.1,-0.1) circle [radius=0.03];
		 \draw [fill] (0.5,0.6) circle [radius=0.03];
		 \draw [fill] (0.5,-0.8) circle [radius=0.03];
		 \draw [fill] (1.1,-0.1) circle [radius=0.03];
	   \node at (-0.35,-0.1) {1};
		 \node at (0.5,0.85) {2};
		 \node at (0.5,-1.05) {3};
		 \node at (1.35,-0.1) {4};
          \node at (0.5,-1.5) {A graph with $\alpha(G) = 3$, $\alpha'(G) = 1$, and $\alpha''(G) = 3$.};
  \end{tikzpicture}\end{equation}

From the definition of the independence number, we can prove a basic bound which we will use later. The \textit{discrete graph} on $n$ vertices is the graph with $V(G) = \{x_1, \cdots, x_n\}$ and $E(G) = \emptyset$.

\begin{lemma}
\label{discreteindep}
    If $G$ is a finite graph of order $n$ that is not the discrete graph, then $\alpha(G)$ < n.
\end{lemma}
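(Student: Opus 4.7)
The plan is a direct contrapositive-style argument working straight from the definitions. Since $G$ is not the discrete graph, its edge set $E(G)$ must be nonempty, so I would first extract from this some edge $e \in E(G)$ and write it as $e = v \cdot v'$ with $v, v' \in V(G)$ distinct (distinctness is automatic because $G$ is simple).

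Next, I would argue that no independent set $A \subseteq V(G)$ can contain both $v$ and $v'$: by definition an independent set consists of pairwise non-adjacent vertices, whereas $v$ and $v'$ are adjacent via $e$. Therefore every independent set $A$ is a subset of $V(G) \setminus \{v\}$ or of $V(G) \setminus \{v'\}$, and in either case $|A| \leq n-1$. Taking the maximum over all independent sets then yields $\alpha(G) \leq n - 1 < n$.

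I do not anticipate any real obstacle here; the statement is essentially an immediate consequence of the definition of \emph{independent set} combined with the assumption that at least one edge exists. The only thing that needs a moment of care is the bookkeeping that $|V(G) \setminus \{v\}| = n - 1$, which uses finiteness of $G$ and the fact that $v \in V(G)$. Everything else is definitional.
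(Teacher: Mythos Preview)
Your proposal is correct and follows essentially the same approach as the paper: both arguments pick an edge $v\cdot v'$ (which exists because $G$ is not discrete) and observe that an independent set cannot contain both endpoints, so no independent set can have all $n$ vertices. The only cosmetic difference is that the paper phrases this as a contradiction from assuming an independent set of size $n$, while you bound $|A|\le n-1$ directly.
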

\begin{proof}
    Suppose we have an independent set containing $n$ vertices. Since $G$ is not the discrete graph, there exists an edge $x \cdot y$. However, this means that $x$ and $y$ can't be in the independent set containing all of the vertices as they are adjacent.
\end{proof}

We will also use a simple bound on the edge independence number.

\begin{lemma}
\label{basicedgeindepbound}
Let $G$ be a simple, connected and finite graph of order $n$. Then $\alpha'(G) \leq \floor*{\frac{n}{2}}$.
\end{lemma}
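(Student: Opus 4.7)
The plan is to argue directly from the definitions: an edge independent set is precisely a matching, and each edge in a matching consumes two distinct vertices that cannot be reused by any other edge of the matching. Thus the vertex count provides an immediate upper bound.

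First I would let $A' \subseteq E(G)$ be an edge independent set with $|A'| = \alpha'(G)$, and consider the multiset of end vertices of edges in $A'$. Since $G$ is simple, each edge $e = x \cdot x' \in A'$ contributes two \emph{distinct} end vertices $x$ and $x'$. Next I would observe that if two edges $e, e' \in A'$ shared a vertex, then by definition they would be adjacent, contradicting that $A'$ is an edge independent set. Therefore the $2\alpha'(G)$ end vertices arising from $A'$ are all distinct elements of $V(G)$.

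Since $|V(G)| = n$, this gives $2\alpha'(G) \leq n$, so $\alpha'(G) \leq n/2$. Because $\alpha'(G)$ is a non-negative integer, we may replace $n/2$ by its floor, yielding $\alpha'(G) \leq \floor*{\frac{n}{2}}$, as required.

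There is no real obstacle here; the only subtle point is the use of simplicity of $G$ to guarantee that the two ends of each edge are distinct (ruling out loops), and the use of the non-adjacency condition on $A'$ to guarantee that no vertex is shared across two edges of $A'$. The hypothesis of connectedness plays no role in this particular bound and is retained only for consistency with the standing assumptions of the paper.
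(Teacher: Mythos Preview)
Your proof is correct and follows essentially the same approach as the paper: take a maximum edge independent set, observe that non-adjacency forces the $2\alpha'(G)$ end vertices to be distinct, bound by $n$, and then pass to the floor using integrality. Your additional remarks about the role of simplicity and the irrelevance of connectedness are accurate but do not alter the argument.
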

\begin{proof}
Let $X$ be an edge independent set of size $\alpha'(G)$. Each edge in $X$ can not share an end vertex with any other edge otherwise they would be adjacent to one another. Therefore, there are $2\alpha'(G)$ vertices contained in $X$ as end points of the edges. Hence, $2\alpha'(G) \leq n$ which implies that $\alpha'(G) \leq \frac{n}{2}$. Since $\alpha'(G)$ is an integer, $\alpha'(G) \leq \floor*{\frac{n}{2}}$ as desired.
\end{proof}

A \textit{Hamiltonian cycle} is a cycle that contains each vertex exactly once. A \textit{Hamiltonian graph} is a graph which contains a Hamiltonian cycle. For Hamiltonian graphs, we obtain a stronger result. 

\begin{lemma}
\label{Hamiltonianedgeindep}
Let $G$ be a simple, connected and finite graph of order $n$ which is Hamiltonian. Then $\alpha'(G) = \floor*{\frac{n}{2}}$.
\end{lemma}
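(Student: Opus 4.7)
The plan is to combine the upper bound already proved in Lemma \ref{basicedgeindepbound}, namely $\alpha'(G) \leq \floor*{\frac{n}{2}}$, with a matching lower bound obtained by explicitly constructing an edge independent set of size $\floor*{\frac{n}{2}}$ using the Hamiltonian cycle.

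Concretely, since $G$ is Hamiltonian, fix a Hamiltonian cycle and label its vertices consecutively as $v_1, v_2, \ldots, v_n, v_1$, so that each $v_i \cdot v_{i+1}$ is an edge of $G$. I would then take the set of alternating cycle edges
\[
X = \{v_1 \cdot v_2,\; v_3 \cdot v_4,\; v_5 \cdot v_6,\; \ldots\},
\]
stopping at $v_{n-1} \cdot v_n$ when $n$ is even and at $v_{n-2} \cdot v_{n-1}$ when $n$ is odd. In both cases $|X| = \floor*{\frac{n}{2}}$, and since the vertices $v_1, v_3, v_5, \ldots$ are all distinct, no two edges in $X$ share an end vertex; hence $X$ is an edge independent set. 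This forces $\alpha'(G) \geq \floor*{\frac{n}{2}}$, and combined with Lemma \ref{basicedgeindepbound} we obtain the desired equality.

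There is no real obstacle here; the argument is essentially a one-line construction. The only small point to handle carefully is the parity split, to confirm that in the odd case the leftover vertex $v_n$ is simply not used (and in particular the edge $v_n \cdot v_1$ closing the cycle is deliberately omitted so that adjacency with $v_1 \cdot v_2$ is avoided).
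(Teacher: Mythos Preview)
Your proposal is correct and follows essentially the same approach as the paper: both arguments pick alternating edges along a Hamiltonian cycle (splitting into the even and odd cases) to exhibit an edge independent set of size $\floor*{\frac{n}{2}}$, and then invoke Lemma~\ref{basicedgeindepbound} for the matching upper bound.
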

\begin{proof}
Let $C$ be a Hamiltonian cycle in $G$ consisting of vertices $x_1, \cdots, x_n$. We can construct a total independent set by taking every other edge in the Hamiltonian cycle. If $n$ is even, take $x_1 \cdot x_2, x_3 \cdot x_4, \cdots, x_{n-1} \cdot x_n$ as an edge independent set. If $n$ is odd, take $x_1 \cdot x_2, x_3 \cdot x_4, \cdots, x_{n-2} \cdot x_{n-1}$. In both cases, these are edge independent sets containing $\floor*{\frac{n}{2}}$ edges. Therefore, $\alpha'(G) \geq \floor*{\frac{n}{2}}$. However, since $\alpha'(G) \leq \floor*{\frac{n}{2}}$ by Lemma \ref{basicedgeindepbound}, $\alpha'(G) = \floor*{\frac{n}{2}}$.
\end{proof}

We also have a basic upper and lower bound on the total independence number which follows from the definition.

\begin{lemma}
\label{basicbound}
Let $G$ be a simple, connected and finite graph. Then $\alpha''(G) \leq \alpha(G) + \alpha'(G)$.
\end{lemma}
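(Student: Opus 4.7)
The plan is to take a maximum total independent set $A''$ and split it into its vertex part and its edge part, then apply the definitions of $\alpha(G)$ and $\alpha'(G)$ to each piece separately.

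More precisely, I would first write $A'' = V'' \sqcup E''$, where $V'' = A'' \cap V(G)$ and $E'' = A'' \cap E(G)$. This is a disjoint union because $V(G)$ and $E(G)$ are disjoint as sets in the formalism of the paper, so $|A''| = |V''| + |E''|$. The next step is to observe that the notion of non-adjacency in $A''$ restricts to the usual notions on $V''$ and $E''$: two vertices in $V''$ are non-adjacent in $A''$ if and only if they are non-adjacent as vertices of $G$, and similarly for two edges in $E''$. This is immediate from the three-case definition of adjacency given earlier in the section.

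From this restriction, $V''$ is an independent set of vertices in $G$, so by the definition of $\alpha(G)$ we get $|V''| \leq \alpha(G)$. Likewise, $E''$ is an edge independent set in $G$, so $|E''| \leq \alpha'(G)$. Adding these two bounds gives
\[
\alpha''(G) = |A''| = |V''| + |E''| \leq \alpha(G) + \alpha'(G),
\]
which is the desired inequality.

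There is no serious obstacle here; the only thing to be slightly careful about is the sentence justifying why non-adjacency within $A''$ restricts correctly to the purely-vertex and purely-edge parts, but this follows directly from the definitions and does not require any graph-theoretic work. The vertex/edge adjacency condition between elements of $V''$ and $E''$ is not even needed for the bound, since it only further constrains $A''$ and can only make the inequality tighter.
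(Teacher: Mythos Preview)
Your proposal is correct and follows essentially the same approach as the paper: take a maximum total independent set, split it into its vertex part and edge part, bound each by $\alpha(G)$ and $\alpha'(G)$ respectively, and add. The paper's proof is the same argument stated more tersely, without the extra sentence justifying that non-adjacency restricts correctly to each piece.
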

\begin{proof}
Let $X$ be a total independent set of size $\alpha''(G)$. Let $X_v = X \cap V(G)$ be the set of vertices contained in $X$ and $X_e = X \cap E(G)$ be the set of edges contained in $X$. From the definition of $\alpha(G)$ and $\alpha'(G)$, we must have that $|X_v| \leq \alpha(G)$ and $|X_e| \leq \alpha'(G)$ and hence $|X| = |X_v| + |X_e| \leq \alpha(G) + \alpha'(G)$. 
\end{proof}

\begin{lemma}
Let $G$ be a simple, connected and finite graph. Then $\alpha''(G) \geq \max\{\alpha(G),\alpha'(G)\}$.
\end{lemma}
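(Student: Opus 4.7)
The plan is to observe that a vertex independent set and an edge independent set are each special cases of a total independent set, and then invoke the definition of $\alpha''(G)$ as a maximum.

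First I would take $A \subseteq V(G)$ to be an independent set realising $\alpha(G)$, so $|A| = \alpha(G)$. Since $A$ contains no edges, the only adjacencies one could possibly have between elements of $A$ are vertex-vertex adjacencies, and these are ruled out by the assumption that $A$ is independent. Hence $A$ qualifies as a total independent set, and by definition of $\alpha''(G)$ as a maximum, $\alpha''(G) \geq |A| = \alpha(G)$.

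Next I would take $A' \subseteq E(G)$ to be an edge independent set realising $\alpha'(G)$, so $|A'| = \alpha'(G)$. Since $A'$ contains no vertices, the only adjacencies that could occur between elements of $A'$ are edge-edge adjacencies, which are excluded by $A'$ being edge independent. Hence $A'$ is a total independent set and $\alpha''(G) \geq |A'| = \alpha'(G)$.

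Combining these two inequalities yields $\alpha''(G) \geq \max\{\alpha(G), \alpha'(G)\}$. There is no real obstacle here; the only subtlety worth flagging explicitly is that, for a set consisting entirely of vertices or entirely of edges, the mixed vertex-edge notion of adjacency from the definitions of Section \ref{basicdefns} has nothing to check, so vertex-independence (respectively edge-independence) already implies total independence.
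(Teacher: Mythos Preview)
Your proof is correct and follows exactly the same approach as the paper: take a maximum independent set and a maximum edge independent set, observe that each is already a total independent set, and conclude both inequalities. The only difference is that you spell out why no mixed vertex--edge adjacencies can occur in a pure vertex set or pure edge set, whereas the paper simply asserts this is ``clear.''
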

\begin{proof}
Let $X_{\alpha(G)}$ be an independent set of size $\alpha(G)$. This is clearly a total independent set so $\alpha''(G) \geq \alpha(G)$. Similarly, let $X_{\alpha'(G)}$ be an edge independent set of size $\alpha'(G)$. This is clearly also a total independent set so $\alpha''(G) \geq \alpha'(G)$. Combining these together, we find that $\alpha''(G) \geq \max\{\alpha(G),\alpha'(G)\}$.
\end{proof}

These two bounds do not provide much insight into the total independence number. By considering the underlying structure of a total independent set, Theorem \ref{mainbound} will provide an alternative bound on the total independence number in relation to the independence number and the edge independence number. In Section \ref{howgood}, we consider when this bound is strictly better than the bound in Lemma \ref{basicbound}.

There are some connections between dominating sets and independent sets which we will use. 

\begin{definition} 
A subset $D \subseteq E(G)$ is an \textit{edge dominating set} if every edge $e \in E(G) \setminus D$ is adjacent to an edge in $D$. The \textit{edge domination number} of $G$ is $\gamma'(G) = \min\{\left\lvert D \right\rvert \:|\: D$ is an edge dominating set of $G\}$. 
\end{definition}

The edge domination number can be used to determine the total independence number \cite{LZZ}.

\begin{lemma}
\label{edgedominationtotalindep}
Let $G$ be a simple, connected and finite graph. We have that $\gamma'(G) + \alpha''(G) = n$.
\qedno
\end{lemma}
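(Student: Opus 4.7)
The plan is to prove the identity $\gamma'(G) + \alpha''(G) = n$ through two matching-style constructions, one turning a maximum total independent set into an edge dominating set and the other reversing the process.

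For the bound $\alpha''(G) + \gamma'(G) \leq n$, I would start with a maximum total independent set $X$ and split it as $X = X_v \sqcup X_e$ with $X_v \subseteq V(G)$ and $X_e \subseteq E(G)$. Since $X$ is total independent, $X_e$ is a matching with $2|X_e|$ distinct endpoints, all of which lie outside $X_v$. Writing $Y' = V(G) \setminus (X_v \cup V(X_e))$, the maximality of $X$ forces every $v \in Y'$ to be adjacent to some vertex in $X_v$, for otherwise $X \cup \{v\}$ would still be total independent. For each $v \in Y'$ I pick one such edge $e_v$ from $v$ into $X_v$ and set $D = X_e \cup \{e_v : v \in Y'\}$. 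A short case check on where the two endpoints of an arbitrary edge $f \in E(G)$ lie, combined with the fact that $X_v$ is itself independent as a set of vertices, shows that every edge of $G$ either lies in $D$ or shares an endpoint with an element of $D$, so $D$ is an edge dominating set. Its size is $|X_e| + |Y'| = n - |X_v| - |X_e| = n - \alpha''(G)$, giving $\gamma'(G) \leq n - \alpha''(G)$.

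For the reverse inequality, I would invoke the classical theorem of Yannakakis and Gavril, which guarantees that every graph admits a minimum edge dominating set that is at the same time a matching. Given such a $D$ with $|D| = \gamma'(G)$, put $X_v = V(G) \setminus V(D)$ and $X_e = D$. Then $X_e$ is a matching, no vertex of $X_v$ is incident to an edge of $X_e$, and $X_v$ is independent: any edge $uv$ with $u, v \in X_v$ would share no vertex with $V(D)$, so no edge of $D$ could be adjacent to it, contradicting that $D$ is edge dominating. Hence $X_v \cup X_e$ is a total independent set of size $(n - 2|D|) + |D| = n - \gamma'(G)$, so $\alpha''(G) \geq n - \gamma'(G)$.

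The main obstacle is the matching assumption used in the second direction. If one prefers not to cite Yannakakis--Gavril, one has to show directly that any minimum edge dominating set can be converted into one of the same size that is also a matching: starting from $D$ containing two incident edges $e_1, e_2$ sharing a vertex, swap one of them for a carefully chosen edge incident to its far endpoint so that edge domination is preserved while the number of incident pairs in $D$ strictly decreases, then iterate. Everything else in the argument is a routine verification of the two explicit constructions above.
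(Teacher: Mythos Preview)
The paper does not supply its own proof of this lemma: it is quoted from \cite{LZZ} and closed with a \qedno, so there is nothing in the paper to compare your argument against.

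That said, your proposal is a correct, self-contained proof. In the first direction, the key observation that every $v\in Y'$ must have a neighbour in $X_v$ follows exactly as you say from maximality of $X$ together with the fact that $v\notin V(X_e)$ rules out adjacency to any edge of $X_e$; the case analysis showing $D=X_e\cup\{e_v:v\in Y'\}$ is edge dominating is complete once one notes that both endpoints of an edge cannot lie in the independent set $X_v$. The counting $|D|=|X_e|+|Y'|=n-|X_v|-|X_e|=n-\alpha''(G)$ is clean (the edges $e_v$ are pairwise distinct and disjoint from $X_e$ since each has its $Y'$-endpoint outside $V(X_e)$). For the reverse direction, invoking Yannakakis--Gavril to obtain a minimum edge dominating set that is simultaneously a matching is standard and makes the construction of a total independent set of size $n-\gamma'(G)$ immediate; your sketch of how to avoid that citation by iteratively uncrossing incident pairs in $D$ is also the usual argument and works.

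So your write-up actually supplies more than the paper does here; the paper treats the identity as a black box from the literature.
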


Observe that in (\ref{independencenumberfigure}), $n = 4$ and $\gamma'(G) = 1$, and so Lemma ~\ref{edgedominationtotalindep} implies that $\alpha''(G) = 3$ as expected. The property that we will require is that a largest edge independent set is also an edge dominating set.

\begin{lemma}
\label{edgeindepisdomin}
Let $G$ be a simple, connected and finite graph of order $n$. Let $A' \subseteq E(G)$ be an edge independent set of size $\alpha'(G)$. Then $A'$ is also an edge dominating set. 
\end{lemma}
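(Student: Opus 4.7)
The plan is to argue by contradiction, exploiting the maximality of $A'$ among edge independent sets. Suppose $A'$ is not an edge dominating set. Then by definition there exists some edge $e \in E(G) \setminus A'$ which is not adjacent to any edge in $A'$, meaning $e$ shares no end vertex with any edge of $A'$.

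The key observation is then that $A' \cup \{e\}$ remains an edge independent set: the edges of $A'$ are pairwise non-adjacent by hypothesis, and $e$ is non-adjacent to every edge of $A'$ by the choice of $e$, so all pairs in $A' \cup \{e\}$ are non-adjacent. But $|A' \cup \{e\}| = \alpha'(G) + 1$, contradicting the definition of $\alpha'(G)$ as the maximum size of an edge independent set. Hence no such $e$ exists, and $A'$ is an edge dominating set.

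This is essentially a one-line maximality argument, so there is no real obstacle; the only thing to be careful about is stating cleanly why $e \notin A'$ forces us into the "not dominated" situation (i.e., unpacking the definition of edge dominating set correctly) and verifying both cases for non-adjacency when adjoining $e$ to $A'$.
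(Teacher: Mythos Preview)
Your proposal is correct and matches the paper's proof essentially line for line: both argue by contradiction, take an undominated edge $e$, and observe that $A' \cup \{e\}$ would be a strictly larger edge independent set, violating maximality. Your version is slightly more explicit about why $A' \cup \{e\}$ is independent and why $e \notin A'$, but the underlying argument is identical.
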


\begin{proof}
Let $A' \subseteq E(G)$ be a maximal edge independent set of size $\alpha'(G)$. Suppose $A'$ is not an edge dominating set. This implies that there is an edge $e \in E(G)$ such that $e$ is not adjacent to any edge in $A'$. However, this implies that the set $B' = A' \cup \{e\}$ is an edge independent set of size larger than $A'$, contradicting the maximality of $A'$.
\end{proof}

There are two further invariants of graphs in this section which we consider. An \textit{acyclic} graph is a graph without any cycles. A \textit{subgraph} of a simple, connected and finite graph $G$ is a simple. finite graph $H$ for which $V(H) \subseteq V(G)$ and $E(H) \subseteq E(G)$. We denote $H$ being a subgraph of $G$ as $H \subseteq G$. An \textit{induced subgraph} is a subgraph of $G$ which has vertex set $V(H)$, and contains all of the edges of $G$ on the vertices of $H$.

\begin{definition}
The \textit{arboricity} of an undirected graph $G$ is the minimum number of subsets that $V(G)$ can be partitioned into such that each subset induces an acyclic graph. We will denote this $\beta(G)$.
\end{definition}

\begin{definition}
The \textit{edge arboricity} of an undirected graph $G$ is the minimum number of subsets that $E(G)$ can be partitioned into such that each subset induces an acyclic graph. We will denote this $\beta'(G)$.
\end{definition}

These are also both well studied invariants which tell us how dense a graph is. The value of the edge arboricity of a graph is given by the Nash-Williams theorem \cite{CMWZZ}.  

\begin{theorem}
    Let $G$ be a finite, simple and connected graph. The edge arboricity is given by the formula \[\beta'(G) = \max_{H \subset G}\ceil*{\frac{m_H}{n_H}-1}\] where $H$ runs over all subgraphs of $G$ with order strictly greater than $1$, $n_H$ denotes the order of $H$ and $m_H$ denotes the size of $H$.
    \qedno
\end{theorem}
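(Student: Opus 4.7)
The plan is to prove the Nash-Williams formula in two steps: an elementary counting argument gives the lower bound, while a reduction to a standard matroid-theoretic result yields the upper bound. For the lower direction $\beta'(G) \geq \max_{H \subseteq G} \ceil*{m_H/(n_H - 1)}$, I would suppose $E(G)$ is partitioned into $k = \beta'(G)$ acyclic subsets $F_1, \ldots, F_k$. For any subgraph $H$ with $n_H > 1$, each intersection $F_i \cap E(H)$ is a forest on at most $n_H$ vertices, hence contains at most $n_H - 1$ edges. Summing over $i$ gives $m_H \leq k(n_H - 1)$, so $k \geq \ceil*{m_H/(n_H-1)}$, and the bound follows by taking the maximum over all $H$.

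For the reverse inequality I would recast the problem matroid-theoretically. The acyclic subsets of $E(G)$ are exactly the independent sets of the cycle matroid $M(G)$, whose rank function is $r(S) = |V(S)| - c(S)$, where $V(S)$ is the set of vertices incident to $S$ and $c(S)$ is the number of connected components of $(V(S), S)$. Partitioning $E(G)$ into $k$ forests is then the problem of covering $E(G)$ by $k$ independent sets of $M(G)$, which by Edmonds' matroid partition theorem is possible if and only if $|S| \leq k \cdot r(S)$ for every $S \subseteq E(G)$. Restricting attention to a single component of $(V(S), S)$ shows the binding constraints come from connected subgraphs $H$, and for these the condition becomes exactly $k \geq \ceil*{m_H/(n_H-1)}$, which matches the hypothesis.

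The main obstacle is Edmonds' matroid partition theorem itself, whose own proof uses an augmenting-sequence argument in an auxiliary exchange graph and is a substantial piece of combinatorial optimization in its own right. Since the statement is cited from \cite{CMWZZ} and marked with $\square$ in the excerpt, my approach would invoke the partition theorem as a black box. A fully self-contained proof would instead use a direct exchange argument: assume $k - 1$ forests cannot cover $E(G)$, pick an uncovered edge $e$, and attempt to reassign edges along an alternating sequence of forest memberships; failure of every such reassignment forces the existence of a dense subgraph violating the hypothesis, yielding the desired contradiction.
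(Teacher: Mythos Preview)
The paper does not prove this theorem at all: it is stated with a terminal $\square$ and attributed to \cite{CMWZZ}, so there is no in-paper argument to compare against. Your sketch is the standard two-part proof of Nash--Williams' theorem (easy counting lower bound, Edmonds' matroid partition theorem for the upper bound), and it is correct as an outline; you are right that the real work is hidden in the matroid partition theorem, and that a self-contained proof would replace it by an exchange/augmenting argument along the lines of the short proof in \cite{CMWZZ}.

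One point worth flagging: you have silently repaired the formula. The statement as printed in the paper has $\ceil*{\frac{m_H}{n_H}-1}$, whereas the Nash--Williams formula you actually prove is $\ceil*{\frac{m_H}{n_H-1}}$. These are not the same quantity (for $H=K_3$ the first gives $\ceil{2/3}=1$ and the second gives $\ceil{3/2}=2$), and only the second is correct. Your proof is a proof of the correct theorem, not of the statement exactly as written; it would be worth noting the typo explicitly rather than letting it pass.
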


We can use the arboricity and edge arboricity to bound the total independence number \cite{CWZZ}.

\begin{lemma}
Let $G$ be a simple, connected and finite graph. Then \[1+\floor*{\frac{n}{2}} \leq \beta(G) + \alpha''(G) \leq n+1\] and \[1+\floor*{\frac{n}{2}} \leq \beta'(G) + \alpha''(G) \leq n+ \floor*{\frac{(n+1)}{8}}.\]
\qedno
\end{lemma}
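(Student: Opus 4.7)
The plan is to treat the four inequalities separately, with Lemma \ref{edgedominationtotalindep} as the common tool for rewriting $\alpha''(G)$ as $n - \gamma'(G)$.

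For the two lower bounds, I would first observe $\beta(G), \beta'(G) \geq 1$. Combining Lemma \ref{edgeindepisdomin} with Lemma \ref{basicedgeindepbound} gives $\gamma'(G) \leq \alpha'(G) \leq \floor*{\frac{n}{2}}$, and then Lemma \ref{edgedominationtotalindep} produces $\alpha''(G) = n - \gamma'(G) \geq \ceil*{\frac{n}{2}} \geq \floor*{\frac{n}{2}}$. Adding these inequalities gives both lower bounds at once.

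For the upper bound $\beta(G) + \alpha''(G) \leq n+1$, Lemma \ref{edgedominationtotalindep} reduces the task to proving $\beta(G) \leq \gamma'(G) + 1$. I would fix a minimum edge dominating set $D$ with $|D| = \gamma'(G)$ and let $V_D$ denote the set of endpoints of edges of $D$; note $|V_D| \leq 2\gamma'(G)$. The complement $V' = V(G) \setminus V_D$ must be independent in $G$, since any edge $u \cdot v$ with $u, v \in V'$ would be an edge of $E(G) \setminus D$ with no endpoint in $V_D$ and so could not be adjacent to any edge of $D$, contradicting that $D$ is an edge dominating set. Partitioning $V_D$ into $\ceil*{\frac{|V_D|}{2}} \leq \gamma'(G)$ classes of size at most $2$, each class induces at most one edge and is therefore acyclic; together with $V'$ as a single independent class, this exhibits $V(G)$ as a union of at most $\gamma'(G) + 1$ induced forests.

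The remaining inequality $\beta'(G) + \alpha''(G) \leq n + \floor*{\frac{n+1}{8}}$ is the most delicate; after the same rewriting it reads $\beta'(G) \leq \gamma'(G) + \floor*{\frac{n+1}{8}}$. I would apply the Nash-Williams theorem to select a subgraph $H$ of $G$ realizing $\beta'(G) = \ceil*{\frac{m_H}{n_H - 1}}$, and exploit the crucial fact that every edge of $H$, being an edge of $G$, must be incident to the minimum edge dominating set $D$ of $G$. Writing $a = |V(H) \cap V_D|$, this forces $m_H \leq \binom{a}{2} + a(n_H - a)$, so $\beta'(G)$ is bounded by a quadratic expression in $a$ and $n_H$. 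The global constraints $a \leq \min(2\gamma'(G), n_H)$ and $n_H \leq n$ should combine via a quadratic optimization to produce the $\floor*{\frac{n+1}{8}}$ slack. The main obstacle will be obtaining this exact constant: the naive estimate yields only $\beta'(G) \leq 2\gamma'(G) + O(1)$, so a finer balancing between the densest subgraph $H$ and the complement of $V_D$ is required, presumably with a case split on how $V(H)$ is distributed between $V_D$ and its complement.
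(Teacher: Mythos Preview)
The paper does not prove this lemma at all: it is quoted from the reference \cite{CWZZ} and marked with \qedno\ immediately after the statement. There is therefore no proof in the paper to compare your proposal against.

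On the merits of your proposal itself: your arguments for the two lower bounds and for the upper bound $\beta(G)+\alpha''(G)\le n+1$ are correct and self-contained using only Lemmas~\ref{basicedgeindepbound}, \ref{edgedominationtotalindep} and~\ref{edgeindepisdomin} from the paper. The remaining inequality $\beta'(G)+\alpha''(G)\le n+\floor*{\frac{n+1}{8}}$, however, is only sketched, and you explicitly flag that your outlined Nash--Williams estimate does not yet reach the constant $\floor*{\frac{n+1}{8}}$; the ``quadratic optimization'' and ``finer balancing'' you allude to are not carried out. So as it stands the proposal has a genuine gap for that fourth inequality, and to close it you would need to consult the original argument in \cite{CWZZ} or supply the missing optimization in full.
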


We also have some special types of graph which we will consider. The \textit{complete graph} $K_n$, $n \geq 1$ is a graph with $V(G) = \{x_1, \cdots, x_n\}$ and the edge set consists of an edge for every pair of distinct vertices $x_i,x_j$, $1 \leq i < j \leq n$. The \textit{complete bipartite graph} $K_{m,n}$, $n,m \geq 1$ is a graph with $V(G) = A \cup B$ where $A$ and $B$ are disjoint sets with $|A| = m$ and $|B| = n$. The edge set consists of all edges of the form $a \cdot b$ where $a \in A$ and $b \in B$.

In this paper, we will prove a relation between $\alpha''(G)$ with $\alpha(G)$ and $\alpha'(G)$ and use the proof of this to show the possible structures of a total independent set with a given number of elements.

\section{Preliminary Result}
\label{prelimres}

In this Section, we set out some basic results which will help us prove the main theorem. The purpose of these lemmas is to show that the procedure we use to prove the main theorem will always work for all simple, connected and finite graphs. 

\begin{lemma}
\label{vertandedgebound}
For any simple, connected and finite graph $G$ of order $n$, $\alpha(G) + 2\alpha'(G) \geq n$.
\end{lemma}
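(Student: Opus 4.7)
The plan is to exploit Lemma \ref{edgeindepisdomin}, which says that a maximum edge independent set is automatically edge dominating. Concretely, I would fix an edge independent set $A' \subseteq E(G)$ of size $\alpha'(G)$, and look at the vertices it ``misses''.

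First I would let $V'$ denote the set of endpoints of edges in $A'$. Since the edges in $A'$ are pairwise non-adjacent, they have pairwise disjoint endpoint pairs, so $|V'| = 2\alpha'(G)$. Define $S = V(G) \setminus V'$, which has cardinality $n - 2\alpha'(G)$ (nonnegative by Lemma \ref{basicedgeindepbound}).

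The key step is to show that $S$ is an independent set of vertices. Suppose for contradiction that there were two vertices $u, v \in S$ joined by an edge $e = u \cdot v$. Since $u, v \notin V'$, neither $u$ nor $v$ is an endpoint of any edge in $A'$, so $e$ shares no endpoint with any edge of $A'$, i.e.\ $e$ is not adjacent to any edge of $A'$. This contradicts Lemma \ref{edgeindepisdomin}, which tells us $A'$ is an edge dominating set. Hence $S$ is independent.

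Finally, since $S$ is an independent set, $\alpha(G) \geq |S| = n - 2\alpha'(G)$, which rearranges to the claimed inequality $\alpha(G) + 2\alpha'(G) \geq n$. There is no real obstacle here; the only thing to be a little careful about is verifying that the endpoints of the edges in $A'$ are all distinct (so that $|V'| = 2\alpha'(G)$ exactly), which follows immediately from edge independence, and that $|S| \geq 0$, which follows from Lemma \ref{basicedgeindepbound}.
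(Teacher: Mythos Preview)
Your proposal is correct and follows essentially the same argument as the paper: take a maximum matching $A'$, use Lemma~\ref{edgeindepisdomin} to see it is edge dominating, and conclude that the set of uncovered vertices is independent, giving $\alpha(G) \geq n - 2\alpha'(G)$. The paper additionally splits off the case $\alpha'(G) = \lfloor n/2 \rfloor$ and handles it separately, but as your write-up shows this is not actually necessary, since the empty set is vacuously independent and the inequality $\alpha(G) \geq |S|$ holds even when $|S| = 0$.
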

\begin{proof}
First consider the case $\alpha'(G) = \floor*{\frac{n}{2}}$. Since $n$ is an integer, $\frac{n-1}{2} \leq \floor*{\frac{n}{2}} \leq \frac{n}{2}$ which implies that $n-1 \leq 2 \cdot \floor*{\frac{n}{2}} \leq n$. Therefore, $2\alpha'(G) \geq n-1$. Since $\alpha(G) \geq 1$, it follows that $2\alpha'(G) + \alpha(G) \geq n$.

Now suppose $\alpha'(G) < \floor*{\frac{n}{2}}$.  Note that $\alpha'(G) < \floor*{\frac{n}{2}}$ implies that $2\alpha'(G) < n$, and so $n-2\alpha'(G) \geq 1$. Now let $F$ be a maximal edge independent set of size $\alpha'(G)$. In particular, $F$ contains $2\alpha'(G)$ distinct vertices as end vertices of edges in $F$. Let $X_F$ be the set of vertices which are end vertices of edges in $F$. Note that $|X_F| = 2\alpha'(G)$. As $F$ is a maximal edge independent set, by Lemma \ref{edgeindepisdomin} it is also an edge dominating set. Therefore, for all $e \in E(G)$, $e = x\cdot y$ for some $x \in V(G)$ and $y \in X_F$. Now consider $V(G) \setminus X_F$. By definition, $|V(G) \setminus X_F| = n - 2\alpha'(G) \geq 1$. We want to show that $V(G) \setminus X_F$ is an independent set. 

If $|V(G) \setminus X_F| = 1$ then it is trivially an independent set. If $|V(G) \setminus X_F| > 1$, consider $x,x' \in V(G) \setminus X_F$ where $x,x'$ are distinct. Suppose $x,x'$ are adjacent vertices, then $x \cdot x' \in E(G)$. However, this implies that either $x \in X_F$ or $x' \in X_F$ which is a contradiction. Therefore, none of the vertices in $V(G) \setminus X_F$ can be adjacent to each other so they form an independent set. Hence, $\alpha(G) \geq |V(G) \setminus X_F| = n-2\alpha'(G)$, and so $\alpha(G) + 2\alpha'(G) \geq n$.
\end{proof}

\section{Main Theorem}
\label{mainthm}

With the results in the previous section, we can now prove the main theorem. The basic idea is to partition the total independent sets by the amount of edges contained in them and construct an upper bound for each set in turn. The upper bound therefore on $\alpha''(G)$ will be the biggest of these bounds. For a set $X \subset V(G) \cup E(G)$ in the following, we will use the following notation, $X_v = X \cap V(G)$ and $X_e = X \cap E(G)$.

\begin{theorem}
\label{mainbound}
Let $G$ be a simple, connected and finite graph of order $n$. Then \[\alpha''(G) \leq n + \floor*{\frac{\alpha(G)-n + 2\alpha'(G)}{2}} - \alpha'(G).\]
\end{theorem}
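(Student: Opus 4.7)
The plan is to let $X$ be a total independent set attaining the maximum size $\alpha''(G)$, write $X = X_v \cup X_e$ with $X_v = X \cap V(G)$ and $X_e = X \cap E(G)$, and set $k := |X_e|$. Two complementary upper bounds on $|X|$ follow immediately. Since $X_e \subseteq X$ is itself an edge independent set, the $k$ edges in $X_e$ cover exactly $2k$ distinct end vertices, and none of these end vertices can belong to $X_v$ (by the vertex–edge adjacency rule). Hence $X_v$ lies in a set of $n - 2k$ vertices, giving the first bound $|X| = k + |X_v| \leq n - k$. On the other hand, $X_v$ is an independent set in $G$ in its own right, so $|X_v| \leq \alpha(G)$, giving the second bound $|X| \leq \alpha(G) + k$.

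The key step is to combine these two bounds by an elementary case analysis on $k$. If $k \geq (n - \alpha(G))/2$ then $n - k \leq (n + \alpha(G))/2$, while if $k \leq (n - \alpha(G))/2$ then $\alpha(G) + k \leq (n + \alpha(G))/2$. In both regimes the smaller of the two bounds is at most $(n + \alpha(G))/2$, so
\[
\alpha''(G) = |X| \leq \left\lfloor \frac{n + \alpha(G)}{2} \right\rfloor.
\]
Finally, a short algebraic manipulation shows that the right-hand side of the theorem equals $\lfloor (n+\alpha(G))/2 \rfloor$: because $\alpha'(G)$ is an integer it can be pulled out of the floor, giving
\[
n + \left\lfloor \frac{\alpha(G) - n + 2\alpha'(G)}{2} \right\rfloor - \alpha'(G) = n + \left\lfloor \frac{\alpha(G) - n}{2} \right\rfloor,
\]
and a parity check on $n - \alpha(G)$ shows this agrees with $\lfloor (n+\alpha(G))/2 \rfloor$.

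The main obstacle is a conceptual one rather than a computational one: one has to be comfortable that no additional feasibility check on $k$ is needed. A priori the value $k = |X_e|$ of the extremal set $X$ is only known to lie in $[0, \alpha'(G)]$, and one might worry that the ``balancing point'' $k = (n-\alpha(G))/2$ where the two bounds cross sits outside this range. The reassurance comes from Lemma \ref{vertandedgebound}, which gives $\alpha'(G) \geq (n-\alpha(G))/2$, so the crossover is always admissible; but strictly speaking the inequality $|X| \leq (n+\alpha(G))/2$ is derived for whatever integer value of $k$ the maximum set happens to realise, because each of the two bounds is monotone on its respective side of the crossover and they both meet $(n+\alpha(G))/2$ there. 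That monotonicity observation is the one subtlety to get right.
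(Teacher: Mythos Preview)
Your proof is correct and rests on the same two inequalities the paper uses: with $k=|X_e|$, the endpoint count gives $|X|\le n-k$ and the bound $|X_v|\le\alpha(G)$ gives $|X|\le\alpha(G)+k$. The paper combines them by introducing the parameter $m'=\floor*{(\alpha(G)-n+2\alpha'(G))/2}$, splitting the range $0\le k\le\alpha'(G)$ at $\alpha'(G)-m'$, and tracking the two monotone bounds separately to arrive at $n-\alpha'(G)+m'$; you instead observe that $\min(n-k,\alpha(G)+k)\le(n+\alpha(G))/2$ for every $k$ and then check algebraically that $n-\alpha'(G)+m'=\floor*{(n+\alpha(G))/2}$. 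Your route is shorter and makes visible that $\alpha'(G)$ cancels from the stated bound (a fact the paper only records later as Corollary~\ref{otherbound}); you are also right that Lemma~\ref{vertandedgebound} is not needed for your argument, whereas the paper invokes it to guarantee $0\le m'<\alpha'(G)$ so that its case split is non-degenerate. The paper's more explicit bookkeeping does buy something, namely the structural corollaries in Section~\ref{structure} on how many edges a total independent set of prescribed size can contain, though these follow equally well from your two displayed inequalities once one writes them as $|X|-\alpha(G)\le k\le n-|X|$.
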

\begin{proof}
Let $G$ be a simple, connected and finite graph and let $\mathcal{X}_G$ be the set of total independent sets of $G$. Partition $\mathcal{X}_G = \mathcal{X}_{G_{\alpha'(G)}} \cup \mathcal{X}_{G_{\alpha'(G)-1}} \cdots \cup \mathcal{X}_{G_{0}}$ where $\mathcal{X}_{G_{i}}$ is the set of total independent sets containing $i$ edges. Let $m' = \floor*{\frac{\alpha(G)-n + 2\alpha'(G)}{2}}$. By Lemma \ref{vertandedgebound}, $m' \geq 0$. Since $G$ is simple, connected and finite, by Lemma \ref{discreteindep} $\alpha(G) < n$, which implies that $\alpha(G) - n < 0$. Therefore, $\alpha(G) - n + 2\alpha'(G) < 2\alpha'(G)$ and so $\frac{\alpha(G)-n + 2\alpha'(G)}{2} < \alpha'(G)$. By definition $m' \leq \frac{\alpha(G)-n + 2\alpha'(G)}{2}$. Hence, $m' < \alpha'(G)$. Observe each vertex can only appear once in a total independent set and we can have a maximum of $\alpha'(G)$ edges in a total independent set and $\alpha(G)$ vertices in the total independent set. 

Consider $X \in \mathcal{X}_{G_{\alpha'(G)-i}}$ with $0 \leq i \leq m'$. Write $X$ as $X = X_v \cup X_e$. By definition of $\mathcal{X}_{G_{\alpha'(G)-i}}$, $|X_e| = \alpha'(G)-i$. This means that $2(\alpha'(G)-i)$ vertices appear in $X_e$ as end vertices. Now, \[n-2(\alpha'(G)-i) = n-2\alpha'(G)+2i \leq n-2\alpha'(G)+2m' \leq n-2\alpha'(G) + 2\alpha'(G) +\alpha(G) - n = \alpha(G).\] Therefore, there are at most $n-2(\alpha'(G)-i)$ vertices which can be added to $X_v$. Hence, $|X| = |X_v| + |X_e| \leq n-2(\alpha'(G)-i) + \alpha'(G)-i = n-\alpha'(G) + i$. Observe that this is increasing with $i$ as $i$ goes from 0 to $m'$. Hence, for all $i \in \mathbb{N}_0$ with $0 \leq i \leq m'$ and for all $X \in \mathcal{X}_{G_{\alpha'(G)-i}}$, $|X| \leq n-\alpha'(G) + m'$

Now consider $X \in \mathcal{X}_{G_{\alpha'(G)-(m'+1)}}$, write $X$ as $X = X_v \cup X_e$. By definition of $\mathcal{X}_{G_{\alpha'(G)-(m'+1)}}$, $|X_e| = \alpha'(G)-(m'+1)$. This means that $2(\alpha'(G)-(m'+1))$ vertices appear in the set as end vertices. However, $m'$ has the property that $m' \leq \frac{\alpha(G)-n + 2\alpha'(G)}{2}$ and $m'+1 > \frac{\alpha(G)-n + 2\alpha'(G)}{2}$. Therefore \[n-2(\alpha'(G)-(m'+1)) = n - 2\alpha'(G) + 2(m'+1) > n - 2\alpha'(G) + 2\left(\frac{\alpha(G)-n + 2\alpha'(G)}{2}\right)\]\[= n-2\alpha'(G) + 2\alpha'(G) + \alpha(G) - n = \alpha(G).\] However, we can not add more than $\alpha(G)$ vertices to the $X_v$. Therefore for $X \in \mathcal{X}_{G_{\alpha'(G)-i}}$ for $i>m'$, we can only add at most $\alpha(G)$ more vertices to the set $X_v$.

Hence, for $X \in \mathcal{X}_{G_{\alpha'(G)-i}}$ with $i>m'$ and writing $X$ as $X = X_v \cup X_e$ where $|X_e| = \alpha'(G)-i$. It follows that $|X| = |X_v| + |X_e| \leq \alpha(G) + \alpha'(G)-i$, which is decreasing as $i$ increases. It follows for all $i \in \mathbb{N}_0$ with  $m'+1 \leq i \leq \alpha'(G)$ and for all $X \in \mathcal{X}_{G_{\alpha'(G)-i}}$, $|X| \leq \alpha(G) + \alpha'(G) -(m'+1)$

Now, $\frac{\alpha(G)-n + 2\alpha'(G)}{2} \leq m'+\frac{1}{2}$ which implies that $\frac{\alpha(G)-n+ 2\alpha'(G)-1}{2} \leq m'$ and so $\alpha(G)-n+ 2\alpha'(G)-1 \leq 2m'$. Hence, $\alpha'(G) - (m'+1) + \alpha(G) \leq n-\alpha'(G) + m'$. Therefore, for all $i \in \mathbb{N}_0$ where $0 \leq i \leq \alpha'(G)$ and for all $X \in \mathcal{X}_{G_{\alpha'(G)-i}}$, $|X| \leq n-\alpha'(G) + m'$, and so $\alpha''(G) \leq n-\alpha'(G) + m'$.
\end{proof}

\section{How good is the estimate?}
\label{howgood}

The next question to consider is how good the bound is that Theorem \ref{mainbound} provides.

\begin{lemma}
\label{sharpbound}
The bound in Theorem \ref{mainbound} is sharp for all $n$.
\end{lemma}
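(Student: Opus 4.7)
The plan is to exhibit, for every positive integer $n$, a graph of order $n$ that realises equality in Theorem \ref{mainbound}; my candidate is the complete graph $K_n$.

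First I would record the two invariants $\alpha(K_n) = 1$ and $\alpha'(K_n) = \floor*{\frac{n}{2}}$. The former is immediate, since every pair of distinct vertices in $K_n$ is adjacent. The latter follows from Lemma \ref{Hamiltonianedgeindep}, because $K_n$ is Hamiltonian for $n \geq 3$; the cases $n = 1, 2$ are handled by inspection. Substituting these values into the right-hand side of the bound and splitting on the parity of $n$, a brief calculation shows that the expression $n + \floor*{\frac{\alpha(G)-n+2\alpha'(G)}{2}} - \alpha'(G)$ collapses to $\ceil*{\frac{n}{2}}$ in both parities.

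Next I would compute $\alpha''(K_n)$ directly from the definition. Since every two vertices of $K_n$ are adjacent, any total independent set $X = X_v \cup X_e$ satisfies $|X_v| \leq 1$. If $X_v = \emptyset$, then $|X| = |X_e| \leq \floor*{\frac{n}{2}}$. If $X_v = \{v\}$, then no edge of $X_e$ may have $v$ as an endpoint, so $X_e$ is a matching in $K_n - v$ and hence $|X_e| \leq \floor*{\frac{n-1}{2}}$. Taking the maximum of these two cases yields $\alpha''(K_n) \leq \ceil*{\frac{n}{2}}$, and this is attained: when $n$ is even, a perfect matching realises $\alpha''(K_n) = \tfrac{n}{2}$; when $n$ is odd, a near-perfect matching together with its single unmatched vertex realises $\alpha''(K_n) = \tfrac{n+1}{2}$.

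Combining the two computations gives equality in Theorem \ref{mainbound} for $K_n$ and hence sharpness for every $n$. There is no real conceptual obstacle here; the only care needed is to verify that the floor in the right-hand side of the bound simplifies to the same value $\ceil*{\frac{n}{2}}$ in both the even and odd cases, which is a routine parity check.
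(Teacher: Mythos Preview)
Your proposal is correct and follows essentially the same route as the paper: both use $K_n$ as the witness, split on the parity of $n$, compute $\alpha(K_n)=1$ and $\alpha'(K_n)=\floor{n/2}$, evaluate the bound to $\ceil{n/2}$, and exhibit a (near-)perfect matching (plus the leftover vertex when $n$ is odd) attaining it. Your separate verification that $\alpha''(K_n)\le\ceil{n/2}$ is redundant, since Theorem~\ref{mainbound} already supplies that upper bound, but it does no harm.
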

\begin{proof}
Consider the complete graph $K_n$ where $n$ is odd. By definition of $K_n$, $\alpha(K_n) = 1$ and $\alpha'(K_n) = \floor*{\frac{n}{2}} = \frac{n-1}{2}$ as $n$ is odd. Therefore by Theorem \ref{mainbound}, \[\alpha''(G) \leq n+ \floor*{\frac{1-n+n-1}{2}} - \frac{n-1}{2} = \frac{1}{2}(n+1) = \frac{1}{2}(n-1)+1.\] We can construct a total independent set of this size by first taking any maximal edge independent set. This contains $n-1$ distinct vertices as end vertices of the edges in this edge independent set and so we can then take the total independent set to be the $\frac{n-1}{2}$ edges in the edge independent set and the vertex left over. This is a total independent set of size $\frac{1}{2}(n-1)+1$ so $\alpha''(G) = \frac{1}{2}(n-1)+1$ in this case.

For $n$ even, $\alpha(K_n) = 1$ and $\alpha'(K_n) = \frac{n}{2}$. Therefore by Theorem \ref{mainbound}, \[\alpha''(G) \leq n + \floor*{\frac{1-n+n}{2}} - \frac{n}{2} = \frac{n}{2}.\] We can construct a total independent set of this size by taking any maximal edge independent set.

\end{proof}

However, it is not always a sharp bound. Consider the graph $K_{2,3}$. It follows from the definition of $K_{2,3}$ that $\alpha(K_{2,3}) = 3$ and $\alpha'(G) = 2$. Therefore by Theorem \ref{mainbound},  $\alpha''(G) \leq 5 + \floor*{\frac{3-5+4}{2}} - 2 = 4$. However, we can not construct a total independent set of this size. If we have a total independent set containing 2 edges, we can only at most add one extra vertex to the total independent set as 4 vertices are end vertices of the edges. If we have a total independent set containing no edges, we can have at most 3 vertices in the total independent set as $\alpha(K_{2,3}) = 3$. 

This leaves the case of a total independent set containing 1 edge. Let $V(K_{2,3}) = A \cup B$ be the partition of the vertices with $|A| = 2$ and $|B| = 3$. The edge contained in the edge independent set is of the form $x \cdot y$ where $x \in A$ and $y \in B$. Therefore, there are three vertices left you could add to the total independent set. However, it is clear you can not add all 3 as the vertex in $A \setminus \{x\}$ is adjacent to both the vertices in $B$. Therefore, we can only add the two vertices in $B \setminus \{y\}$ so the total independent set has size 3. This implies that $\alpha''(G) = 3 < 4$ and the bound is not an equality. 

\[\begin{tikzpicture} 
          \node at (0,-1.5) {Three total independent sets (coloured red) for $K_{2,3}$.};

          \node at (-0.75,-0.5) {2};
          \node at (-0.75,0.5) {1};
          \node at (0.75,-0.75) {5};
          \node at (0.75,0) {4};
          \node at (0.75,0.75) {3};
          \draw [fill] (-0.5,-0.5) circle [radius=0.1];
		 \draw [fill] (-0.5,0.5) circle [radius=0.1];
		 \draw [fill] (0.5,-0.75) circle [radius=0.1];
		 \draw [fill,color=red] (0.5,0) circle [radius=0.1];
          \draw [fill,color=red] (0.5,0.75) circle [radius=0.1];
          \draw[color=red] (-0.5,-0.5) -- (0.5,-0.75);
          \draw (-0.5,-0.5) -- (0.5,0);
          \draw (-0.5,-0.5) -- (0.5,0.75);
          \draw (-0.5,0.5) -- (0.5,-0.75);
          \draw (-0.5,0.5) -- (0.5,0);
          \draw (-0.5,0.5) -- (0.5,0.75);

          \node at (-3,-0.5) {2};
          \node at (-3,0.5) {1};
          \node at (-1.5,-0.75) {5};
          \node at (-1.5,0) {4};
          \node at (-1.5,0.75) {3};
          \draw [fill] (-2.75,-0.5) circle [radius=0.1];
		 \draw [fill] (-2.75,0.5) circle [radius=0.1];
		 \draw [fill, color = red] (-1.75,-0.75) circle [radius=0.1];
		 \draw [fill, color = red] (-1.75,0) circle [radius=0.1];
          \draw [fill, color = red] (-1.75,0.75) circle [radius=0.1];
          \draw (-2.75,-0.5) -- (-1.75,-0.75);
          \draw (-2.75,-0.5) -- (-1.75,0);
          \draw (-2.75,-0.5) -- (-1.75,0.75);
          \draw (-2.75,0.5) -- (-1.75,-0.75);
          \draw (-2.75,0.5) -- (-1.75,0);
          \draw (-2.75,0.5) -- (-1.75,0.75);

         \node at (1.5,-0.5) {2};
          \node at (1.5,0.5) {1};
          \node at (3,-0.75) {5};
          \node at (3,0) {4};
          \node at (3,0.75) {3};
          \draw [fill] (1.75,-0.5) circle [radius=0.1];
		 \draw [fill] (1.75,0.5) circle [radius=0.1];
		 \draw [fill] (2.75,-0.75) circle [radius=0.1];
		 \draw [fill] (2.75,0) circle [radius=0.1];
          \draw [fill,color=red] (2.75,0.75) circle [radius=0.1];
          \draw[color=red] (1.75,-0.5) -- (2.75,-0.75);
          \draw (1.75,-0.5) -- (2.75,0);
          \draw (1.75,-0.5) -- (2.75,0.75);
          \draw (1.75,0.5) -- (2.75,-0.75);
          \draw[color=red] (1.75,0.5) -- (2.75,0);
          \draw (1.75,0.5) -- (2.75,0.75);
          
  \end{tikzpicture}\]

We now show that the estimate in Theorem \ref{mainbound} is the same as or better than the result in Lemma \ref{basicbound}.

\begin{lemma}
The result in Theorem \ref{mainbound} is equal to or better than the result in Lemma \ref{basicbound}.
\end{lemma}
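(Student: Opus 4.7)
The plan is a direct algebraic comparison. I would start from the inequality we want to prove, namely
\[n + \floor*{\frac{\alpha(G)-n + 2\alpha'(G)}{2}} - \alpha'(G) \leq \alpha(G) + \alpha'(G),\]
and rearrange it to isolate the floor term. Moving $n - \alpha'(G)$ to the right-hand side gives
\[\floor*{\frac{\alpha(G)-n + 2\alpha'(G)}{2}} \leq \alpha(G) + 2\alpha'(G) - n.\]

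Then I would introduce the shorthand $k = \alpha(G) - n + 2\alpha'(G)$ so the desired inequality becomes simply $\floor*{\frac{k}{2}} \leq k$. The key input is Lemma \ref{vertandedgebound}, which guarantees that $k \geq 0$ for any simple, connected, finite graph $G$. Once we know $k \geq 0$, the inequality $\floor*{\frac{k}{2}} \leq \frac{k}{2} \leq k$ is immediate.

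There is no real obstacle; the only thing to be careful about is signs, since without Lemma \ref{vertandedgebound} the quantity $k$ could in principle be negative, in which case $\floor*{\frac{k}{2}} \leq k$ could fail (for example, if $k = -1$ then $\floor*{\frac{k}{2}} = -1$ which equals $k$, but if $k = -2$ then $\floor*{\frac{k}{2}} = -1 > -2 = k$). So the role of Lemma \ref{vertandedgebound} is essential, and citing it is the substantive step of the proof. After that, I would conclude that Theorem \ref{mainbound} always gives a bound at least as strong as Lemma \ref{basicbound}.
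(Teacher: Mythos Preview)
Your proposal is correct and is essentially the same argument as the paper's: both reduce to the observation that, with $k = \alpha(G)-n+2\alpha'(G) \geq 0$ (Lemma \ref{vertandedgebound}), one has $\floor{k/2} \leq k$, which the paper writes as $m' \leq 2m' \leq k$. Your version is slightly more explicit about why the nonnegativity of $k$ is needed, but the content is identical.
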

\begin{proof}
We have that $n+m'-\alpha'(G) \leq n+2m'-\alpha'(G) \leq n+\alpha(G) - n +2\alpha'(G) - \alpha'(G) = \alpha(G) + \alpha'(G)$.
\end{proof}

We can prove easily when this is a strict inequality based on the value of $\frac{\alpha(G)-n + 2\alpha'(G)}{2}$.

\begin{lemma}
If $\frac{\alpha(G)-n + 2\alpha'(G)}{2} > 0$ then the result in Theorem \ref{mainbound} is strictly better than the result in Lemma \ref{basicbound}.
\end{lemma}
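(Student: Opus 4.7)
The plan is to unpack what ``strictly better'' means and reduce it to an elementary inequality about the floor function. Writing $k = \alpha(G) - n + 2\alpha'(G)$, the hypothesis says $k/2 > 0$, while $m' = \floor*{k/2}$ in the notation of Theorem \ref{mainbound}. The bound from Theorem \ref{mainbound} is strictly better than the bound from Lemma \ref{basicbound} exactly when
\[
n + m' - \alpha'(G) < \alpha(G) + \alpha'(G),
\]
which on rearrangement is equivalent to $m' < \alpha(G) + 2\alpha'(G) - n = k$, i.e., $\floor*{k/2} < k$.

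First I would note that $k$ is an integer (since $\alpha(G)$, $\alpha'(G)$ and $n$ are), so the hypothesis $k/2 > 0$ upgrades to $k \geq 1$. Then I would observe that for any positive integer $k$,
\[
\floor*{\tfrac{k}{2}} \leq \tfrac{k}{2} < k,
\]
where the second inequality uses $k > 0$. This yields $m' < k$, which is exactly the strict inequality that was needed.

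Finally I would translate this back: since $m' < \alpha(G) + 2\alpha'(G) - n$, adding $n - \alpha'(G)$ to both sides gives $n + m' - \alpha'(G) < \alpha(G) + \alpha'(G)$, which is the desired conclusion. There is no genuine obstacle here; the entire argument is just the observation that for a positive integer $k$, $\floor*{k/2}$ is strictly less than $k$, and the heart of the proof is simply recognising the right algebraic rearrangement to make this the relevant question.
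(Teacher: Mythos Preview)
Your proof is correct. The core observation is the same as the paper's: with $k = \alpha(G) - n + 2\alpha'(G)$ a positive integer, one needs $\floor{k/2} < k$. The paper, however, establishes this via a case split on whether $k \geq 2$ (where it argues $n + m' < n + 2m' \leq \alpha(G) + \alpha'(G)$ using $m' \geq 1$) or $k = 1$ (where $m' = 0$ and it uses $n - \alpha'(G) < \alpha(G) + \alpha'(G)$ directly). Your argument unifies both cases with the single inequality $\floor{k/2} \leq k/2 < k$ for $k \geq 1$, which is cleaner and avoids the somewhat artificial distinction; nothing is lost and the presentation is tidier.
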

\begin{proof}
Case 1: $\frac{\alpha(G)-n + 2\alpha'(G)}{2} \geq 1$. In this case, $\frac{\alpha(G)-n + 2\alpha'(G)}{2} \geq 1$ which implies that  $m' \geq 1$. Therefore, \[n+m'-\alpha'(G) < n+2m'-\alpha'(G) \leq n+\alpha(G) - n +2\alpha'(G) - \alpha'(G) = \alpha(G) + \alpha'(G).\]

Case 2: $\frac{\alpha(G)-n + 2\alpha'(G)}{2} = \frac{1}{2}$. In this case, $\frac{\alpha(G)-n + 2\alpha'(G)}{2} = \frac{1}{2}$ which implies that $m' = 0$. We have that $0 < \frac{\alpha(G)-n + 2\alpha'(G)}{2}$ which implies that $0 < \alpha(G)-n + 2\alpha'(G)$, and so $n-\alpha'(G) < \alpha(G) + \alpha'(G)$.
\end{proof}

We can show that this is in general not true when $\frac{\alpha(G)-n + 2\alpha'(G)}{2} = 0$. Consider the graph $K_3$. It follows from the definition of $K_3$ that $\alpha(K_3) = 1$ and $\alpha'(K_3) = 1$. This satisfies the condition $\frac{\alpha(K_3)-n + 2\alpha'(K_3)}{2} = \frac{1-3 + 2}{2} = 0$ and we can see that $\alpha(K_3) + \alpha'(K_3) = 2$ and $n - \alpha'(G) = 2$. The same is true for any $K_n$ where $n$ is odd. When $n$ is even, $\alpha(G) = 1$, and $\alpha'(G) = \frac{n}{2}$, and so $\frac{\alpha(G)-n+2\alpha'(G)}{2} = \frac{1}{2}$. 

We can characterise simple, connected, and finite Hamiltonian graphs $G$ with the property that $\frac{\alpha(G)-n + 2\alpha'(G)}{2} = 0$. Observe that $\frac{\alpha(G)-n + 2\alpha'(G)}{2} = 0$ if and only if $\alpha(G) + 2\alpha'(G) = n$, and $\floor*{\frac{n}{2}} = \frac{n}{2}$ if $n$ is even, and $\floor*{\frac{n}{2}} = \frac{n-1}{2}$ if $n$ is odd. By Lemma \ref{Hamiltonianedgeindep}, $\alpha'(G) = \floor*{\frac{n}{2}}$. Therefore, if $n$ is even, $\alpha(G) +2\alpha'(G) = \alpha(G) +n$. This implies that if $\alpha(G) + 2\alpha'(G) = n$, then $\alpha(G) + n = n$, and so $\alpha(G)=0$ which is impossible if $G$ is non-empty. if $n$ is odd, $\alpha(G) +2\alpha'(G) = \alpha(G) +n-1$. This implies that if $\alpha(G) + 2\alpha'(G) = n$, then $\alpha(G) + n-1 = n$, and so $\alpha(G)=1$. A graph has $\alpha(G) = 1$ if and only if $G$ is the complete graph, since any missing edge gives rise to an independent set of size $2$. It would be interesting to know if there are other families of graphs for which we can characterise when $\frac{\alpha(G)-n + 2\alpha'(G)}{2} = 0$.

\section{Structure of maximal total independent set}
\label{structure}

Theorem \ref{mainbound} considers every possible structure of a total independent set and produces a bound for each such set. We can adapt this proof to provide the possible structures of a maximal total independent set. 

\begin{corollary}
Let $G$ be a simple, connected and finite graph of order $n$. The number of edges $e$ in a total independent set of size $k$ must satisfy $k-\alpha(G) \leq e \leq n - k$.
\end{corollary}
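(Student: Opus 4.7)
The plan is to let $X$ be any total independent set of size $k$ and write $X = X_v \cup X_e$ as in the proof of Theorem \ref{mainbound}, where $|X_e| = e$ and $|X_v| = k - e$. Both inequalities should then drop out of the definition of a total independent set applied to the two pieces, with essentially no further machinery required beyond what already appears in Lemma \ref{basicbound} and in the paragraph analysing $\mathcal{X}_{G_{\alpha'(G)-i}}$ in the main theorem.

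For the lower bound $e \geq k - \alpha(G)$, I would observe that the vertices in $X_v$ are pairwise non-adjacent (they come from a total independent set), so $X_v$ is itself an ordinary independent set of vertices and hence $|X_v| \leq \alpha(G)$. Rearranging $|X_v| = k - e \leq \alpha(G)$ immediately gives $e \geq k - \alpha(G)$.

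For the upper bound $e \leq n - k$, I would note that the $e$ edges in $X_e$ are pairwise non-adjacent, so they form a matching, contributing exactly $2e$ distinct vertices of $G$ as end vertices. By the total independence of $X$, none of these $2e$ end vertices can lie in $X_v$, since any vertex of $X_v$ that was an end vertex of an edge in $X_e$ would be adjacent to that edge. Therefore the end vertices of $X_e$ together with the vertices of $X_v$ form a disjoint collection of vertices of $G$, giving $2e + (k - e) \leq n$, which simplifies to $e \leq n - k$.

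There is no real obstacle here; the argument is essentially a repackaging of the counting used in the proof of Theorem \ref{mainbound}, separated out so that the constraint on the number of edges becomes explicit. The only point to flag is that the bounds need not be tight for every $k$ in the interval $[\alpha''(G)$, downwards$]$: existence of a total independent set of size $k$ with exactly $e$ edges for every $e \in [k - \alpha(G), n - k]$ is not claimed and would require extra hypotheses on $G$.
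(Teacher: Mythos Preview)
Your proof is correct. The two inequalities you isolate, $|X_v|\le\alpha(G)$ and $2e+|X_v|\le n$, are exactly the counting facts that drive the paper's argument as well, so in that sense the content is the same. The presentation, however, is genuinely more direct: the paper reparametrises the number of edges as $e=\alpha'(G)-i$, splits into the two regimes $i\le m'$ and $i>m'$ inherited from the proof of Theorem~\ref{mainbound}, and then extracts the upper bound from the first regime and the lower bound from the second. That case split is unnecessary here, since both of your inequalities hold for every total independent set regardless of where $e$ sits relative to $\alpha'(G)-m'$; your version makes this transparent and avoids any appeal to the structure of the proof of Theorem~\ref{mainbound}. The trade-off is only expository: the paper's route emphasises that the corollary is a repackaging of bounds already established, while yours shows that the statement is in fact self-contained.
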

\begin{proof}
From the proof of Theorem \ref{mainbound}, we can see that for a total independent set with $e = \alpha'(G)-i$ edges in it with $i \leq m'$, the size of a total independent set can be at most $n-\alpha'(G)+i$. Therefore, we want to consider the cases where $n-\alpha'(G)+i \geq k$. This implies that $i \geq k + \alpha'(G) - n$ and so $-i \leq n-k-\alpha'(G)$. Hence $e = \alpha'(G) - i \leq n-k$.

Now for $i > m'$, from the proof of Theorem \ref{mainbound}, a total independent set containing $e = \alpha'(G)-i$ edges can have size at most $\alpha'(G)-i +\alpha(G)$. Similar to the previous case, we want to consider the cases where $\alpha'(G)-i +\alpha(G) \geq k$. This implies that $i \leq \alpha'(G)+\alpha(G)-k$ and so $-i \geq k - \alpha'(G) - \alpha(G)$. Hence $e = \alpha'(G) - i \geq k - \alpha(G)$.

Combining the two, we obtain $k - \alpha(G) \leq e \leq n-k$.
\end{proof}

We note by letting $k = \alpha''(G)$, we immediately obtain the following result.

\begin{corollary}
\label{structurecor}
Let $G$ be a simple,connected and finite graph of order $n$ with total independence number $\alpha''(G)$. 
The number of edges $e$ in a largest total independent set of size $\alpha''(G)$ must satisfy $\alpha''(G)-\alpha(G) \leq e \leq n - \alpha''(G)$.
\qedno
\end{corollary}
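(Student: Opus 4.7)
The plan is to obtain this statement as a direct specialisation of the preceding corollary rather than by any fresh graph-theoretic argument. The first step is to observe that, because $G$ is finite, the maximum in the definition of $\alpha''(G)$ is attained, so a largest total independent set is in particular a total independent set whose size is the specific value $k := \alpha''(G)$. In other words, the hypothesis of the previous corollary is non-vacuous for this choice of $k$.

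The second step is to substitute $k = \alpha''(G)$ into the conclusion $k - \alpha(G) \leq e \leq n - k$ supplied by the previous corollary. The lower bound becomes $\alpha''(G) - \alpha(G) \leq e$, and the upper bound becomes $e \leq n - \alpha''(G)$, which is precisely the desired two-sided inequality on the number of edges contained in a largest total independent set.

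Because the entire argument is a single substitution, there is no serious obstacle to address; no new case analysis of the graph is required, and I would not re-open the partitioning of $\mathcal{X}_G$ used in the proof of Theorem \ref{mainbound}. The only implicit sanity check is that the interval $[\alpha''(G)-\alpha(G),\, n-\alpha''(G)]$ is non-empty, which is equivalent to $\alpha''(G) \leq (n+\alpha(G))/2$; this is already guaranteed by the work surrounding Theorem \ref{mainbound} (and would also follow from Corollary \ref{otherbound} mentioned in the introduction), so no additional verification is needed inside the proof itself.
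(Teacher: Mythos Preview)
Your proposal is correct and matches the paper's approach exactly: the paper also obtains this corollary by the single substitution $k=\alpha''(G)$ into the preceding corollary, with no additional argument. Your remarks about non-vacuity and non-emptiness of the interval are fine sanity checks but are not required for the proof.
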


We can use Corollary \ref{structurecor} to easily find another bound on $\alpha''(G)$ in terms of $\alpha(G)$.

\begin{corollary}
\label{otherbound}
Let $G$ be a simple connected and finite graph of order $n$ with total independence number $\alpha''(G)$. 
Then $\alpha''(G) \leq \frac{n + \alpha(G)}{2}$.
\end{corollary}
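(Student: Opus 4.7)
The plan is to derive this directly from Corollary \ref{structurecor}, which is the natural source since Corollary \ref{otherbound} is presented immediately after it. The key observation is that Corollary \ref{structurecor} gives two inequalities involving the same quantity $e$, so chaining them together eliminates $e$ and produces a bound on $\alpha''(G)$ itself.

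More concretely, let $e$ denote the number of edges in some largest total independent set (one of size $\alpha''(G)$). By Corollary \ref{structurecor}, we simultaneously have $\alpha''(G) - \alpha(G) \leq e$ and $e \leq n - \alpha''(G)$. Transitivity then gives $\alpha''(G) - \alpha(G) \leq n - \alpha''(G)$, and rearranging yields $2\alpha''(G) \leq n + \alpha(G)$, i.e., $\alpha''(G) \leq \frac{n + \alpha(G)}{2}$.

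There is no real obstacle here; the work was already absorbed into the proof of Corollary \ref{structurecor}. The only thing worth being careful about is that Corollary \ref{structurecor} is stated for the size $k = \alpha''(G)$, so we are applying the bounds to the correct quantity and $e$ is guaranteed to exist (take $e$ to be the number of edges in any total independent set realizing $\alpha''(G)$).
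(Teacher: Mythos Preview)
Your proof is correct and matches the paper's own argument essentially verbatim: apply Corollary~\ref{structurecor}, chain the two bounds on $e$ to obtain $\alpha''(G)-\alpha(G)\le n-\alpha''(G)$, and rearrange.
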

\begin{proof}
By Corollary \ref{structurecor}, $\alpha''(G)-\alpha(G) \leq e \leq n - \alpha''(G)$ which implies that $\alpha''(G)-\alpha(G) \leq n - \alpha''(G)$. Rearranging this, we obtain $\alpha''(G) \leq \frac{n + \alpha(G)}{2}$.
\end{proof}

The total independence number is a graph invariant which has not been studied much in the literature, and so there are still some unanswered questions. It is known that there are relations between the total independence number and the independence number, edge independence number, arboricity, edge arboricity and edge domination number. 

\begin{question}
Are there relations between the total independence number with any other invariants?
\end{question}

Lemma \ref{sharpbound} shows that the bound in Theorem \ref{mainbound} is sharp. The bound is sharp for the complete graph since the degree of each vertex is high. 

\begin{question}
    Does there exist a simple, connected and finite graph $G$ of order $n$ and maximum degree $d$ such that the bound in Theorem \ref{mainbound} is sharp for all $n$ and all $d$? 
\end{question}

\bibliographystyle{amsalpha}

\end{document}